\theoremstyle{plain} 
\newtheorem{theorem}{Theorem} 
\newtheorem{lemma}[theorem]{Lemma} 
\newtheorem{corollary}[theorem]{Corollary}
\DeclareMathOperator{\dist}{dist} 
\DeclareMathOperator{\Cov}{Cov}
\begin{document} 
\title{Hilbert points in Hilbert space-valued $L^p$ spaces} 
\date{\today} 

\author{Ole Fredrik Brevig} 
\address{Department of Mathematics, University of Oslo, 0851 Oslo, Norway} 
\email{obrevig@math.uio.no}

\author{Sigrid Grepstad} 
\address{Department of Mathematical Sciences, Norwegian University of Science and Technology (NTNU), NO-7491 Trondheim, Norway} 
\email{sigrid.grepstad@ntnu.no}

\begin{abstract}
	Let $H$ be a Hilbert space and $(\Omega,\mathcal{F},\mu)$ a probability space. A Hilbert point in $L^p(\Omega; H)$ is a nontrivial function $\varphi$ such that $\|\varphi\|_p \leq \|\varphi+f\|_p$ whenever $\langle f, \varphi \rangle = 0$. We demonstrate that $\varphi$ is a Hilbert point in $L^p(\Omega; H)$ for some $p\neq2$ if and only if $\|\varphi(\omega)\|_H$ assumes only the two values $0$ and $C>0$. We also obtain a geometric description of when a sum of independent Rademacher variables is a Hilbert point.
\end{abstract}

\subjclass[2020]{Primary 46E40. Secondary 46B09, 60G50} 

\thanks{Sigrid Grepstad is supported by Grant 275113 of the Research Council of Norway.} 

\maketitle

\section{Introduction} Suppose that $(\Omega,\mathcal{F},\mu)$ is a probability space and that $H$ is a Hilbert space. For $1 \leq p \leq \infty$, consider the usual $L^p$ spaces of $H$-valued Bochner integrable functions $f$ on $\Omega$. A \emph{Hilbert point} in $L^p(\Omega; H)$ is a nontrivial function $\varphi$ in $L^p(\Omega; H)$ which enjoys the property that 
\begin{equation}\label{eq:hilbertpoint} 
	\langle f, \varphi \rangle = 0 \qquad \implies \qquad \|\varphi\|_p \leq \|\varphi+f\|_p, 
\end{equation}
for every $f$ in $L^p(\Omega; H)$. The inner product in \eqref{eq:hilbertpoint} is that of the Hilbert space $L^2(\Omega; H)$, namely
\begin{equation}\label{eq:innerproduct}
	\langle f, g \rangle = \int_\Omega \langle f(\omega), g(\omega)\rangle_H \,d\mu(\omega).
\end{equation}
Every nontrivial function in $L^2(\Omega; H)$ is evidently a Hilbert point in $L^2(\Omega; H)$, since in this case $\langle f, \varphi \rangle = 0$ if and only if $\|\varphi+f\|_2^2 = \|\varphi\|_2^2 + \|f\|_2^2$.

If $p < 2$, then some care has to be taken when interpreting the inner product in \eqref{eq:hilbertpoint}. We declare here that $\langle f, \varphi \rangle = 0$ whenever $f$ lies in the $L^p(\Omega; H)$ closure of the set of functions $g$ in $L^\infty(\Omega; H)$ which satisfy $\langle g, \varphi \rangle = 0$. It turns out that this precaution is unnecessary, in view of our first main result.
\begin{theorem}\label{thm:general} 
	A nontrivial function $\varphi$ is a Hilbert point in $L^p(\Omega; H)$ for some $p\neq2$ if and only if there is a constant $C>0$ and a set $E$ with $\mu(E)>0$ such that 
	\begin{equation}\label{eq:general} 
		\|\varphi(\omega)\|_H = 
		\begin{cases}
			C, & \omega \in E, \\
			0, & \omega \not \in E. 
		\end{cases}
	\end{equation}
\end{theorem}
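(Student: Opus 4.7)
The plan is to treat the two directions separately; the ``only if'' direction is the substantive one.

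For sufficiency I would combine the pointwise subgradient inequality for the convex map $v \mapsto \|v\|_H^p$ on $H$,
\[ \|\varphi(\omega) + f(\omega)\|_H^p \geq \|\varphi(\omega)\|_H^p + p\|\varphi(\omega)\|_H^{p-2}\operatorname{Re}\langle \varphi(\omega), f(\omega)\rangle_H \qquad (\omega \in E,\ 1 \leq p < \infty), \]
with the key observation that $\|\varphi(\omega)\|_H^{p-2}$ equals the constant $C^{p-2}$ on $E$. After integrating, the linear correction factors out of the integral and collapses to $pC^{p-2}\langle\varphi,f\rangle = 0$ (since $\varphi$ vanishes off $E$), while the remaining contribution $\int_{E^c}\|f\|_H^p\,d\mu$ is nonnegative. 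For $p = \infty$ a direct argument suffices: if $\|\varphi + f\|_\infty < C$, then $\operatorname{Re}\langle \varphi(\omega), f(\omega)\rangle_H < 0$ pointwise on $E$, contradicting $\langle \varphi, f\rangle = 0$.

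For necessity the strategy is to test the Hilbert point property against scalar perturbations $f = \alpha\varphi$, where $\alpha$ is a bounded real measurable function supported on $\{\|\varphi\|_H \leq n\}$ for some finite $n$; this ensures $f \in L^\infty(\Omega;H)$ and sidesteps the closure subtlety for $p < 2$ entirely. The orthogonality condition becomes $\int \alpha \|\varphi\|_H^2\,d\mu = 0$, and the Hilbert point inequality reads $\int |1+\alpha|^p \|\varphi\|_H^p\,d\mu \geq \int \|\varphi\|_H^p\,d\mu$. I would then substitute $t\alpha$ for $\alpha$, expand $|1 + t\alpha|^p = 1 + pt\alpha + O(t^2)$ uniformly in $\omega$ (using boundedness of $\alpha$), divide by $t$, and let $t \to 0^\pm$ to obtain
\[ \int \alpha \|\varphi\|_H^p\,d\mu = 0 \qquad \text{whenever} \qquad \int \alpha \|\varphi\|_H^2\,d\mu = 0 \]
for every $\alpha$ in the above class. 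Elementary linear duality then forces $\|\varphi\|_H^p$ and $\|\varphi\|_H^2$ to be proportional a.e.\ on $\{0 < \|\varphi\|_H \leq n\}$; letting $n \to \infty$ and matching the constants across truncation levels, $\|\varphi\|_H^{p-2}$ is a single constant on $\operatorname{supp}(\varphi)$, and because $p \neq 2$ this yields the two-value structure of \eqref{eq:general}.

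The case $p = \infty$ I would handle by an explicit perturbation: if $\|\varphi(\omega)\|_H \leq C - \delta$ on a subset $A \subset \operatorname{supp}(\varphi)$ of positive measure, then with $B = \operatorname{supp}(\varphi) \setminus A$ and $\alpha = t\mathbf{1}_A - s\mathbf{1}_B$ chosen so that $t\int_A \|\varphi\|_H^2\,d\mu = s\int_B \|\varphi\|_H^2\,d\mu$, small positive $t,s$ yield $\|(1+\alpha)\varphi\|_\infty \leq \max\{(1+t)(C-\delta),\,(1-s)C\} < C$, contradicting the Hilbert point property. The main obstacle I expect is identifying the correct test family---the realization that \emph{scalar} multiples $\alpha\varphi$, supported where $\varphi$ is bounded, are rich enough to capture the full constraint and simultaneously evade the $p < 2$ closure issue. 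Once that family is in place, the necessity reduces to a first-order convexity computation combined with one-dimensional linear duality on $L^\infty$.
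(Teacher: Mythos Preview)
Your argument is correct, but the route differs substantially from the paper's. For sufficiency the paper observes that the orthogonal projection $P_\varphi f = \langle f,\varphi\rangle\,\varphi/\|\varphi\|_2^2$ has operator norm at most $\|\varphi\|_p\|\varphi\|_q/\|\varphi\|_2^2$ by H\"older, and this ratio equals $1$ precisely under \eqref{eq:general}; your pointwise subgradient argument reaches the same conclusion but trades the single H\"older step for a convexity inequality plus a separate $p=\infty$ case. The necessity proofs diverge more sharply. The paper proceeds by duality: from $\|\varphi\|_p = \dist(\varphi,X)$ it extracts via Hahn--Banach and the (vector-valued) Riesz representation a norming functional $\psi \in L^q$, identifies $\psi$ through the equality case of H\"older, and then tests the resulting implication $\langle g,\varphi\rangle=0 \Rightarrow \langle g,\|\varphi\|_H^{p-2}\varphi\rangle=0$ against explicit $g$'s built from characteristic functions. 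You bypass the functional-analytic machinery entirely with a first-variation argument against scalar perturbations $t\alpha\varphi$ truncated to $\{\|\varphi\|_H\le n\}$, arriving at essentially the same implication $\int\alpha\|\varphi\|_H^2=0 \Rightarrow \int\alpha\|\varphi\|_H^p=0$ by differentiating at $t=0$. Your approach is more elementary---no Hahn--Banach, no Radon--Nikodym property, no separate treatment of $1\le p<2$ versus $2<p<\infty$---while the paper's duality approach makes the role of the H\"older extremizer $\|\varphi\|_H^{p-2}\varphi$ structurally explicit from the outset rather than emerging from the variational computation.
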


The functions satisfying \eqref{eq:general} are the eigenfunctions of the nonlinear operator associated with H\"older's inequality discussed in \cite[Section~1.2]{HSVZ18}. Equivalently, if $p^{-1}+q^{-1}=1$, then the nontrivial functions attaining equality in H\"older's inequality
\[\langle \varphi, \varphi \rangle \leq \|\varphi\|_p \|\varphi\|_q\]
are precisely those obeying \eqref{eq:general}. It is therefore hardly surprising that H\"older's inequality has a crucial role to play in the proof of Theorem~\ref{thm:general}.

The requirement \eqref{eq:general} does not depend on $p\neq2$, and so we observe the following corollary to Theorem~\ref{thm:general}: A nontrivial function $\varphi$ is a Hilbert point in $L^p(\Omega; H)$ for \emph{some} $p \neq 2$ if and only if it is a Hilbert point in $L^p(\Omega; H)$ for \emph{every} $1 \leq p \leq \infty$. 

The notion of Hilbert points was introduced in a recent paper of Brevig, Ortega-Cerd\`a and Seip \cite{BOCS21}. The focus of that paper was on the Hardy spaces of $d$-dimensional tori, denoted $H^p(\mathbb{T}^d)$. In contrast to the situation we have just observed, we recall from \cite[Section~6]{BOCS21} that in the context of $H^p(\mathbb{T}^d)$ a nontrivial function may be a Hilbert point for only \emph{one} exponent $p\neq2$. 

Another direct corollary of Theorem~\ref{thm:general} (with $\Omega = \mathbb{T}^d$ and $H=\mathbb{C}$) is the following: If a Hilbert point $\varphi$ in $H^p(\mathbb{T}^d)$ for some $p\neq2$ extends to a Hilbert point in the larger space $L^p(\mathbb{T}^d)$, then $\varphi$ is a constant multiple of an inner function. Theorem~\ref{thm:general} similarly provides a new proof of \cite[Corollary~2.5]{BOCS21}, which states that constant multiples of inner functions generate Hilbert points on $H^p(\mathbb{T}^d)$ for every $1 \leq p \leq \infty$.

Information about certain Hilbert points in $H^p(\mathbb{T}^d)$ was parlayed in \cite[Section~5]{BOCS21} to a new proof of the optimal constants in Khintchine's inequality for independent identically distributed Steinhaus variables for $2<p<\infty$. 

In the present paper, we will investigate independent identically distributed Rademacher variables. Let $(\omega_j)_{j\geq1}$ be a sequence of independent random variables taking the values $\pm 1$ with equal probability and consider the $H$-valued function 
\begin{equation}\label{eq:rademacher} 
	\varphi(\omega) = \sum_{j=1}^\infty \omega_j \mathbf{x}_j. 
\end{equation}
From the definition of the inner product \eqref{eq:innerproduct} and a computation, it follows that if $f$ denotes a function of the form \eqref{eq:rademacher}, then
\begin{equation} \label{eq:rademacher2norm}
	\|\varphi\|_{2}^2 = \sum_{j=1}^\infty \|\mathbf{x}_j\|_H^2.
\end{equation}
Our second main result is a characterization of the Hilbert points of the form \eqref{eq:rademacher}.
\begin{theorem}\label{thm:rademacher} 
	Let $\Omega=\{-1,1\}^\infty$ be the Cantor group and $\mu$ its Haar measure. The function \eqref{eq:rademacher} is a Hilbert point in $L^p(\Omega; H)$ for some $p \neq 2$ if and only if either 
	\begin{enumerate}
		\item[(a)] $(\mathbf{x}_j)_{j\geq1}$ is an orthogonal sequence satisfying $\sum_{j\geq1} \|\mathbf{x}_j\|_H^2 < \infty$. 
		\item[(b)] $\mathbf{x}_1=\mathbf{x}_2$ for a nonzero vector $\mathbf{x}_1$ and $\mathbf{x}_j=0$ for all $j \geq 3$. 
		\item[(c)] $\mathbf{u}$ and $\mathbf{v}$ are nonzero vectors satisfying $\|\mathbf{u}\|_H=\|\mathbf{v}\|_H$ and $\mathbf{u} \perp \mathbf{v}$, and
		\[\mathbf{x}_1 = \mathbf{u}, \qquad \mathbf{x}_2 = \frac{1}{2}\mathbf{u} + \frac{\sqrt{3}}{2} \mathbf{v}, \qquad \mathbf{x}_3 = \frac{1}{2}\mathbf{u} - \frac{\sqrt{3}}{2}\mathbf{v},\]
		and $\mathbf{x}_j = 0$ for all $j \geq 4$.
	\end{enumerate}
\end{theorem}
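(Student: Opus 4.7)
My plan is to invoke Theorem~\ref{thm:general} to reduce the problem to the condition that $F(\omega):=\|\varphi(\omega)\|_H^2$ takes only the two values $0$ and some $C^2>0$ almost surely, and to extract geometric rigidity from this via the measure-preserving flip maps $\sigma_j\colon \omega \mapsto (\omega_1,\ldots,-\omega_j,\ldots)$ on the Cantor group $\Omega$. Sufficiency of (a), (b), (c) is a direct verification: \eqref{eq:rademacher2norm} shows $F\equiv \sum_j\|\mathbf{x}_j\|_H^2$ is constant in case (a); in case (b), $F(\omega)=(\omega_1+\omega_2)^2\|\mathbf{x}_1\|_H^2\in\{0,4\|\mathbf{x}_1\|_H^2\}$; in case (c), enumerating the eight sign patterns of $(\omega_1,\omega_2,\omega_3)$ gives $F\in\{0, 4\|\mathbf{u}\|_H^2\}$, with $F=0$ exactly when $\omega_1=-\omega_2=-\omega_3$.

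For necessity, let $E:=\{F=0\}$. If $\mu(E)=0$, then $F\equiv C^2$ has zero variance; expanding $F=\sum_j\|\mathbf{x}_j\|_H^2 + 2\sum_{j<k}\omega_j\omega_k\langle\mathbf{x}_j,\mathbf{x}_k\rangle_H$ in the $L^2$-orthonormal system $\{\omega_j\omega_k\}_{j<k}$ yields $\sum_{j<k}\langle\mathbf{x}_j,\mathbf{x}_k\rangle_H^2=0$, which is case (a). If instead $\mu(E)>0$, the identity $\varphi(\sigma_j\omega)=\varphi(\omega)-2\omega_j\mathbf{x}_j$ together with $\sigma_j$ being measure-preserving shows that for a.e.\ $\omega\in E$ one has $\|\varphi(\sigma_j\omega)\|_H^2=4\|\mathbf{x}_j\|_H^2\in\{0,C^2\}$, so every nonzero $\mathbf{x}_j$ has norm $c:=C/2$. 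A double flip similarly gives $\|\varphi(\sigma_j\sigma_k\omega)\|_H^2=2C^2+8\omega_j\omega_k\langle\mathbf{x}_j,\mathbf{x}_k\rangle_H\in\{0,C^2\}$ for a.e.\ $\omega\in E$, which for $j,k$ in the support $S:=\{j:\mathbf{x}_j\neq 0\}$ forces $\langle\mathbf{x}_j,\mathbf{x}_k\rangle_H\neq 0$ and pins $\omega_j\omega_k$ to a single sign on $E$. Hence on $E$ all $(\omega_j)_{j\in S}$ are determined up to one global sign, whence $\mu(E)\leq 2^{1-|S|}$, forcing $|S|$ finite.

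With $n:=|S|<\infty$, I pick $\omega^*\in E$ and apply the change of variables $\tilde{\mathbf{x}}_j:=\omega^*_j\mathbf{x}_j$; the measure-preserving bijection $\omega\mapsto\omega\cdot\omega^*$ preserves the Hilbert-point property and yields $\sum_{j=1}^n\tilde{\mathbf{x}}_j=0$, so $(1,\ldots,1)\in E$ in the new variables. Flipping any subset $T\subseteq\{1,\ldots,n\}$ at this point gives $\bigl\|\sum_{j\in T}\tilde{\mathbf{x}}_j\bigr\|_H^2\in\{0,c^2\}$. Taking $|T|=1,2,3$ in turn forces $\|\tilde{\mathbf{x}}_j\|_H=c$, then $\langle\tilde{\mathbf{x}}_j,\tilde{\mathbf{x}}_k\rangle_H\in\{-c^2,-c^2/2\}$, and finally that in every triple all three pairwise inner products must equal $-c^2/2$ (since no other combination of values from $\{-c^2,-c^2/2\}$ sums into $\{-c^2,-3c^2/2\}$). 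For $n\geq 3$ every pair sits in some triple, so every pairwise inner product equals $-c^2/2$, and $\bigl\|\sum_j\tilde{\mathbf{x}}_j\bigr\|_H^2=n(3-n)c^2/2=0$ forces $n=3$; the three vectors then form an equilateral triangle in a two-dimensional subspace, which after untransforming (and possibly relabeling and flipping signs of individual $\omega_j$'s) matches case (c). For $n=2$, $\tilde{\mathbf{x}}_1+\tilde{\mathbf{x}}_2=0$ directly yields case (b).

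The hardest step is the infinite-to-finite reduction: turning the measure-theoretic statement $F\in\{0,C^2\}$ a.s.\ into the combinatorial rigidity that $\omega_j\omega_k$ must take a single value on $E$ for each pair $j,k\in S$, from which $\mu(E)\leq 2^{1-|S|}$ and hence finiteness of the support follow. Once the support is finite, the remaining analysis amounts to a systematic enumeration of the flip-consequences for subsets of size at most $3$.
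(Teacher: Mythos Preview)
Your proof is correct and takes a genuinely different route from the paper's.

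The paper anchors the argument at a \emph{nonzero} value of $\varphi$: after arranging $\mathbf{u}_0:=\varphi(\mathbf{1})\neq 0$, it introduces the index set $J=\{j:\mathbf{x}_j\neq 0,\ \|\mathbf{u}_0-2\mathbf{x}_j\|=\|\mathbf{u}_0\|\}$ and does a case split on $|J|\in\{0,1,2,\geq 3\}$, supported by three geometric lemmas (the last proved by induction) that control when sums $\mathbf{u}_0+\mathbf{u}(K)$ can vanish. By contrast, you split on whether the zero set $E=\{\|\varphi\|_H=0\}$ is null. When $\mu(E)=0$, your Walsh--Fourier variance computation gives orthogonality (case~(a)) in one line, bypassing the paper's Lemma~\ref{lem:geom3} entirely. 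When $\mu(E)>0$, you anchor at a \emph{zero} of $\varphi$: the double-flip identity forces $\omega_j\omega_k$ to a single sign on $E$ for each pair in the support, yielding the neat measure bound $\mu(E)\leq 2^{1-|S|}$ and hence finite support, after which the analysis reduces to subsets of size at most $3$.

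What each approach buys: your variance argument for case~(a) and the $\mu(E)\leq 2^{1-|S|}$ finiteness step are cleaner and avoid the paper's auxiliary lemmas and induction. The paper's approach is more uniform in that it never separates the constant-norm and vanishing cases, and its geometric lemmas are stated in a reusable form. One small point worth making explicit in your write-up: after reducing to finite support, the case $n=1$ is impossible (since $\sum_j\tilde{\mathbf{x}}_j=0$ would force $\tilde{\mathbf{x}}_1=0$), so $n\in\{2,3\}$ exhausts the possibilities.
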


We close out this introduction with a probabilistic interpretation of Theorem~\ref{thm:rademacher}. Let us therefore think of $H$-valued functions $f$ on $\Omega$ as $H$-valued random variables. We refer to \cite[Section~6.1]{HNVW17} for the definitions of standard concepts used below. In particular, we note that the functions $\varphi$ in \eqref{eq:rademacher} are \emph{real symmetric}, since $\varphi$ and $-\varphi$ have the same distribution. It thus follows from \cite[Proposition~6.1.5]{HNVW17} that if $\varphi$ is of the form \eqref{eq:rademacher} and $f$ and $\varphi$ are \emph{independent}, then
\begin{equation} \label{eq:indep}
	\|f\|_p \leq \|f+\varphi\|_p
\end{equation}
for every $1 \leq p \leq \infty$. If $f$ is an integrable $H$-valued or scalar-valued random variable, then the expectation $\mathbb{E}(f) = \int_\Omega f(\omega)\,d\mu(\omega)$ exists. The \emph{covariance} of two $H$-valued random variables $f$ and $g$ is defined by 
\[\Cov(f,g) = \mathbb{E}(\langle f,g \rangle_H) - \langle \mathbb{E}(f), \mathbb{E}(g) \rangle_{H},\]
and two $H$-valued random variables are said to be \emph{uncorrelated} if $\Cov(f,g)=0$. Since the real-symmetric variables $\varphi$ from \eqref{eq:rademacher} satisfy $\mathbb{E}(\varphi)=\mathbf{0}$, we have the following reformulation of Theorem~\ref{thm:rademacher} which should be compared with \eqref{eq:indep}: Fix $p\neq2$. The $H$-valued random variables of the form \eqref{eq:rademacher} satisfying the requirement $\|\varphi\|_p \leq \|f+\varphi\|_p$ for every $f$ which is uncorrelated to $\varphi$ are precisely those given by Theorem~\ref{thm:rademacher}.

\subsection*{Organization} This paper is comprised of two further sections. Section~\ref{sec:general} is devoted to the proof of Theorem~\ref{thm:general}. The proof of Theorem~\ref{thm:rademacher} can be found in Section~\ref{sec:geometry}.

\section{Proof of Theorem~\ref{thm:general}} \label{sec:general} 
We split the proof of Theorem~\ref{thm:general} into three parts, and present these in order of increasing difficulty.
\begin{proof}
	[Proof of Theorem~\ref{thm:general}: Sufficiency] Assume that \eqref{eq:general} holds, meaning that there is a constant $C>0$ and a set $E$ with $\mu(E)>0$ such that
	\[\|\varphi(\omega)\|_H = 
	\begin{cases}
		C, & \omega \in E, \\
		0, & \omega \not \in E. 
	\end{cases}
	\]
	Our goal is to show that $\varphi$ is a Hilbert point in $L^p(\Omega; H)$. Since $\varphi$ is bounded, it is clear that $\varphi$ is in $L^p(\Omega; H)$ for every $1 \leq p \leq \infty$. In particular, the orthogonal projection 
	\begin{equation}\label{eq:orthogonalproj} 
		P_\varphi f = \frac{\langle f, \varphi \rangle}{\|\varphi\|_2^2} \varphi 
	\end{equation}
	extends to a bounded operator on $L^p(\Omega; H)$. Using H\"older's inequality, we find that
	\[\|P_\varphi\|_{L^p(\Omega; H) \to L^p(\Omega; H)} \leq \frac{\|\varphi\|_q}{\|\varphi\|_2^2} \|\varphi\|_p = 1.\]
	The final equality follows from the fact that $\varphi$ satisfies \eqref{eq:general}. Now if $\langle f, \varphi \rangle = 0$, then
	\[\|\varphi\|_p = \|P_\varphi(\varphi + f)\|_p \leq \|\varphi +f\|_p ,\]
	which demonstrates that $\varphi$ is a Hilbert point in $L^p(\Omega; H)$. 
\end{proof}

The necessity part in the proof of Theorem~\ref{thm:general} requires a separate argument for the case $p=\infty$.

\begin{proof}
	[Proof of Theorem~\ref{thm:general}: Necessity for $p=\infty$] Since $\varphi$ is in $L^\infty(\Omega; H)$ the orthogonal projection $P_\varphi$ in \eqref{eq:orthogonalproj} is a bounded operator on $L^\infty(\Omega, H)$. Every $f$ in $L^\infty(\Omega; H)$ can be orthogonally decomposed as $f = c \varphi + g$, so the assumption that $\varphi$ is a Hilbert point in $L^\infty(\Omega; H)$ implies that
	\[\|P_\varphi f \|_\infty = \|c\varphi\|_\infty \leq \|c\varphi + g\|_\infty = \|f\|_\infty.\]
	Here we tacitly used the fact that $\varphi$ is Hilbert point if and only if $c\varphi$ is a Hilbert point for every constant $c\neq0$. This shows that $\|P_\varphi\|_{L^\infty(\Omega, H) \to L^\infty(\Omega; H)} \leq 1$. Consider
	\[\psi(\omega) = 
	\begin{cases}
		\frac{\varphi(\omega)}{\|\varphi(\omega)\|_H}, & \text{if } \varphi(\omega) \neq \mathbf{0}, \\
		\mathbf{0}, & \text{if } \varphi(\omega) = \mathbf{0}. 
	\end{cases}
	\]
	Since $\varphi$ is nontrivial, it is clear that $\|\psi\|_{L^\infty(\Omega,H)}=1$. Hence $\|P_\varphi \psi\|_\infty \leq 1$, which means that
	\[\frac{\|\varphi\|_1}{\|\varphi\|_2^2} \|\varphi\|_{\infty} \leq 1 \qquad \iff \qquad \|\varphi\|_1 \|\varphi\|_\infty \leq \langle \varphi, \varphi \rangle.\]
	The final inequality is actually an equality, since the reverse inequality is H\"older's inequality. This is only possible if \eqref{eq:general} holds. 
\end{proof}

The proof we have just given can easily be adapted to work also for $2<p<\infty$. However, we run into problems for $1 \leq p < 2$ since we cannot guarantee that the orthogonal projection $P_\varphi$ in \eqref{eq:orthogonalproj} is well-defined. The issue is simply that we cannot guarantee a priori that $\varphi$ is in $L^2(\Omega; H)$. We circumvent this problem by using the Riesz representation theorem for $L^p(\Omega; H)$, which holds in our context since every Hilbert space enjoys the Radon--Nikodym property. We refer broadly to the monograph of Diestel and Uhl \cite[Chapter~IV]{DU77}. The following proof is inspired by arguments from \cite[Section~4.2]{Shapiro71}.
\begin{proof}
	[Proof of Theorem~\ref{thm:general}: Necessity for $p<\infty$] Suppose that $\varphi$ is a Hilbert point in $L^p(\Omega; H)$ where $1 \leq p < 2$ or $2<p<\infty$. Our first goal is to show that we may assume without loss of generality that $\varphi$ does not vanish\footnote{This reduction is technically only needed in the proof of the case $p=1$, but it simplifies the overall exposition also for $p>1$.}. Set $N = \varphi^{-1}(\{\mathbf{0}\})$. The behavior of $f$ on $N$ does not affect the inner product $\langle f, \varphi \rangle$ and may only increase the norm $\|\varphi+f\|_p$. When checking the Hilbert point condition, it is therefore sufficient to consider only $f$ that vanish on $N$. Replacing $\Omega$ with $\Omega \setminus N$, we may therefore assume that $\varphi$ does not vanish.
	
	Let $X$ be the subspace of $L^p(\Omega; H)$ formed by taking the closure of the set of bounded functions $g$ which satisfy $\langle g, \varphi \rangle = 0$. The assumption that $\varphi$ is a Hilbert point in $L^p(\Omega; H)$ means that if $f$ is in $X$, then $\|\varphi\|_p \leq \|\varphi+f\|_p$. This implies that 
	\begin{equation}\label{eq:dist} 
		\|\varphi\|_p = \dist(\varphi,X) = \inf_{f \in X} \|\varphi+f\|_p. 
	\end{equation}
	It follows from \eqref{eq:dist} and the Hahn--Banach theorem that there exists a linear functional $\Phi$ on $L^p(\Omega; H)$ such that $\Phi(\varphi)=1$, $\Phi(f)=0$ for every $f$ in $X$ and $\|\Phi\| = \|\varphi\|_p^{-1}$. In particular, the norm of $\Phi$ is attained at $\varphi$. Now we bring into play the Riesz representation theorem, which tells us that $\Phi(\cdot) = \langle \cdot, \psi \rangle$ for some unique function $\psi$ in $L^q(\Omega; H)$ where $p^{-1}+q^{-1}=1$. However, this means that
	\[\frac{1}{\|\varphi\|_p} = \|\Phi\| = \|\psi\|_q \qquad \text{and} \qquad 1 = \Phi(\varphi) = \langle \varphi, \psi \rangle,\]
	which shows that $\langle \varphi, \psi \rangle = \|\varphi\|_p \|\psi\|_q$. Since we have attained equality in H\"older's inequality and since $\varphi$ does not vanish, we necessarily conclude that
	\[\psi(\omega) = \frac{\|\varphi(\omega)\|_H^{p-2}}{\|\varphi\|_p^p} \varphi(\omega).\]
	
	In view of what we have just established, it follows that if $g$ is in $L^\infty(\Omega; H)$, then 
	\begin{equation}\label{eq:XiffY} 
		\langle g, \varphi \rangle = 0 \qquad \implies \qquad \langle g, \|\varphi\|_H^{p-2} \varphi \rangle = 0. 
	\end{equation}
	Let $F_1$ and $F_2$ be sets of positive measure and define
	\[g = \left(\int_{F_1} \|\varphi\|_H \,d\mu \right)^{-1} \chi_{F_1} \frac{\varphi}{\|\varphi\|_H} - \left(\int_{F_2} \|\varphi\|_H \,d\mu \right)^{-1} \chi_{F_2} \frac{\varphi}{\|\varphi\|_H}.\]
	Here $\chi_{F_1}$ and $\chi_{F_2}$ are the characteristic (scalar) functions of $F_1$ and $F_2$, respectively. Since $\varphi$ does not vanish and since $\varphi$ is in $L^1(\Omega; H)$ by H\"older's inequality, it is clear that $g$ is in $L^\infty(\Omega; H)$ and that $\langle g, \varphi \rangle = 0$. It therefore follows from \eqref{eq:XiffY} that 
	\begin{equation}\label{eq:EF} 
		\langle g, \|\varphi\|_H^{p-2} \varphi \rangle = 0 \qquad \implies \qquad \frac{\int_{F_1} \|\varphi\|_H^{p-1}\,d\mu }{ \int_{F_1} \|\varphi\|_H\,d\mu} = \frac{\int_{F_2} \|\varphi\|_H^{p-1}\,d\mu }{ \int_{F_2} \|\varphi\|_H\,d\mu}. 
	\end{equation}
	There are now two cases. If $p>2$, then it follows from \eqref{eq:EF} that 
	\begin{equation}\label{eq:infsup} 
		\inf_{\omega \in F_1} \|\varphi(\omega)\|_H^{p-2} \leq \frac{\int_{F_1} \|\varphi\|_H^{p-1}\,d\mu }{ \int_{F_1} \|\varphi\|_H\,d\mu} = \frac{\int_{F_2} \|\varphi\|_H^{p-1}\,d\mu }{ \int_{F_2} \|\varphi\|_H\,d\mu} \leq \sup_{\omega \in F_2} \|\varphi(\omega)\|_H^{p-2}. 
	\end{equation}
	Since $F_1$ and $F_2$ are arbitrary sets of positive measure, we deduce from this that $\|\varphi(\omega)\|_H^{p-2} = C$ for almost every $\omega$ in $\Omega$ which means that \eqref{eq:general} holds. The same argument works for $1 \leq p < 2$, provided we first swap $\inf$ and $\sup$ in \eqref{eq:infsup}. 
\end{proof}

As discussed in \cite{BOCS21}, the term Hilbert point is meant to indicate that we are dealing with points in a Banach space around which the geometry of the Banach space behaves like a Hilbert space. In the present paper, we restrict ourselves to mentioning the following immediate corollary of Theorem~\ref{thm:general}.

\begin{corollary} \label{cor:orthogonalporjection}
	A nontrivial function $\varphi$ is a Hilbert point in $L^p(\Omega; H)$ for some $p\neq2$ if and only if $\varphi$ is in $L^2(\Omega; H)$ and the orthogonal projection
	\[P_\varphi f = \frac{\langle f, \varphi \rangle }{\|\varphi\|_2^2} \varphi\]
	extends to a norm $1$ operator on $L^p(\Omega; H)$.
\end{corollary}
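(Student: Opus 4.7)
My plan is to derive the corollary directly from Theorem~\ref{thm:general} together with the sufficiency argument already exhibited in its proof, so no new analytic content is really needed.

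For the forward direction, I would start from the hypothesis that $\varphi$ is a Hilbert point in $L^p(\Omega; H)$ for some $p \neq 2$ and apply Theorem~\ref{thm:general} to obtain the two-value structure \eqref{eq:general}. This immediately puts $\varphi$ in $L^\infty(\Omega; H) \subset L^2(\Omega; H)$, so the defining formula for $P_\varphi$ makes sense on bounded functions. The H\"older estimate $\|P_\varphi\|_{L^p \to L^p} \leq \|\varphi\|_q \|\varphi\|_p / \|\varphi\|_2^2$ used in the sufficiency proof of Theorem~\ref{thm:general}, combined with the equality $\|\varphi\|_p \|\varphi\|_q = \|\varphi\|_2^2$ that follows from \eqref{eq:general}, gives the operator norm bound $1$, and testing on $\varphi$ itself (where $P_\varphi \varphi = \varphi$) shows the bound is attained; in particular the operator extends from bounded functions to all of $L^p$.

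For the converse, I would assume that $\varphi$ lies in $L^2(\Omega; H)$ and that $P_\varphi$ extends to a norm $1$ operator on $L^p(\Omega; H)$. For any $f$ in $L^\infty(\Omega; H)$ with $\langle f, \varphi \rangle = 0$ the defining formula gives $P_\varphi f = 0$, hence $P_\varphi(\varphi + f) = \varphi$, and contractivity yields $\|\varphi\|_p = \|P_\varphi(\varphi+f)\|_p \leq \|\varphi+f\|_p$. Because $P_\varphi$ is continuous on $L^p$, this inequality passes to $L^p$ limits and therefore to every $f$ in the $L^p$-closure of such bounded functions; this is exactly the interpretation of the Hilbert point condition fixed in the introduction.

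I do not expect any substantive obstacle here: the only mildly delicate issue, the well-definedness of $P_\varphi$ on $L^p$ when $p < 2$, is built into the hypothesis of the converse direction, and in the forward direction it comes for free as soon as Theorem~\ref{thm:general} places $\varphi$ in $L^\infty \subset L^2$. Everything else is a repackaging of steps already present in the proof of Theorem~\ref{thm:general}.
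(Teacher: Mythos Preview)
Your proposal is correct and follows essentially the same route as the paper's own proof: Theorem~\ref{thm:general} gives the two-value structure and hence $\varphi\in L^2$, the H\"older computation from the sufficiency proof yields $\|P_\varphi\|\le 1$ with equality at $\varphi$, and the converse is the one-line $\|\varphi\|_p=\|P_\varphi(\varphi+f)\|_p\le\|\varphi+f\|_p$. The only difference is that you spell out the passage to $L^p$-limits of bounded orthogonal functions for $p<2$, which the paper leaves implicit.
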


\begin{proof}
	If $\varphi$ is a Hilbert point in $L^p(\Omega; H)$, then it follows from Theorem~\ref{thm:general} that $\varphi$ is in $L^2(\Omega; H)$ and that $\|P_\varphi\|_{L^p(\Omega; H) \to L^p(\Omega; H)} \leq 1$ by H\"older's inequality. To see that actually $\|P_\varphi\|_{L^p(\Omega; H) \to L^p(\Omega; H)} = 1$ it is sufficient to set $f = \varphi$.
	
	Conversely, suppose that $\varphi$ is in $L^2(\Omega; H)$ and that the orthogonal projection $P_\varphi$ extends to a norm $1$ operator on $L^p(\Omega; H)$. It now follows at once that $\varphi$ is a Hilbert point since if $\langle f, \varphi \rangle =0$, then $\|\varphi\|_p = \|P_\varphi(\varphi+f)\|_p \leq \|\varphi+f\|_p$. 
\end{proof}

\section{Proof of Theorem~\ref{thm:rademacher}} \label{sec:geometry} 
Let it be known that all norms and inner products in the present section will be with respect to some fixed Hilbert space $H$. This is a notational departure from what has been previously employed.

To warm up, we use Theorem~\ref{thm:general} to check that the three cases (a), (b) and (c) of Theorem~\ref{thm:rademacher} indeed describe Hilbert points $\varphi$.
\begin{proof}
	[Proof of Theorem~\ref{thm:rademacher}: Sufficiency] We are going to use Theorem~\ref{thm:general}. 
	\begin{enumerate}
		\item[(a)] If the sequence $(\mathbf{x}_j)_{j\geq1}$ is orthogonal and $\sum_{j\geq1} \|\mathbf{x}_j\|^2 < \infty$, then
		\[\|\varphi(\omega)\|^2 = \sum_{j=1}^{\infty} \|\mathbf{x}_j\|^2 = C\]
		for every $\omega$ in $\Omega$ since $|\pm1|=1$. Hence $\varphi$ is a Hilbert point. 
		\item[(b)] If $\varphi(\omega)=\omega_1 \mathbf{x}_1+ \omega_2 \mathbf{x}_1$, then $\varphi(\pm1,\pm1) = \pm2\mathbf{x}_1$ and $\varphi(\pm1,\mp1) = \mathbf{0}$. Since either $\|\varphi(\omega)\|=0$ or $\|\varphi(\omega)\|=2\|\mathbf{x}_1\|=C$, we see that $\varphi$ is a Hilbert point. 
		\item[(c)] Suppose that $\mathbf{u}$ and $\mathbf{v}$ are vectors of equal length with $\mathbf{u} \perp \mathbf{v}$ and set
		\[\varphi(\omega) = \omega_1 \mathbf{u} + \omega_2 \left(\frac{1}{2}\mathbf{u}+\frac{\sqrt{3}}{2}\mathbf{v}\right)+\omega_3 \left(\frac{1}{2}\mathbf{u}-\frac{\sqrt{3}}{2}\mathbf{v}\right).\]
		There are eight choices of $\omega$. We first compute 
		\begin{align*}
			\varphi(1,1,1) &= 2\mathbf{u}, \\
			\varphi(-1,1,1) &= \mathbf{0}, \\
			\varphi(1,\pm1,\mp1) &= \mathbf{u} \pm \sqrt{3} \mathbf{v}. 
		\end{align*}
		The four remaining choices of $\omega$ produce the same vectors multiplied by $-1$. For our purposes it is therefore sufficient to consider these four. Since $\mathbf{u} \perp \mathbf{v}$ and $\|\mathbf{u}\|=\|\mathbf{v}\|$ we get
		\[\|\mathbf{u} \pm \sqrt{3} \mathbf{v}\| = \sqrt{\|\mathbf{u}\|^2 + 3 \|\mathbf{v}\|^2} = 2 \|\mathbf{u}\|=C,\]
		and hence $\varphi$ is a Hilbert point. \qedhere 
	\end{enumerate}
\end{proof}

The more difficult ``necessity'' part of the proof requires some preparation in the form of a trio of lemmas on elementary Hilbert space geometry. 
\begin{lemma}\label{lem:geom1} 
	Suppose that $\mathbf{u}_0,\mathbf{u}_1,\mathbf{u}_2$ are nonzero vectors in $H$ such that
	\[\|\mathbf{u}_0\|=\|\mathbf{u}_0+\mathbf{u}_1\|=\|\mathbf{u}_0+\mathbf{u}_2\|.\]
	\begin{enumerate}
		\item[(a)] If $\|\mathbf{u}_0+\mathbf{u}_1+\mathbf{u}_2\|=0$, then
		\[\mathbf{u}_1 = -\frac{1}{2}\mathbf{u}_0 + \frac{\sqrt{3}}{2}\mathbf{v} \qquad \text{and} \qquad \mathbf{u}_2 = -\frac{1}{2}\mathbf{u}_0 - \frac{\sqrt{3}}{2}\mathbf{v},\]
		where $\mathbf{v}$ is a vector which satisfies $\mathbf{v} \perp \mathbf{u}_0$ and $\|\mathbf{v}\|=\|\mathbf{u}_0\|$. 
		\item[(b)] If $\|\mathbf{u}_0+\mathbf{u}_1+\mathbf{u}_2\|=\|\mathbf{u}_0\|$, then $\mathbf{u}_1 \perp \mathbf{u}_2$. 
	\end{enumerate}
\end{lemma}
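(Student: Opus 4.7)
The plan is to translate both hypotheses into inner-product identities and then exploit them. The equalities $\|\mathbf{u}_0\| = \|\mathbf{u}_0+\mathbf{u}_i\|$ for $i=1,2$ expand via the Hilbert-space norm to
\[2\langle \mathbf{u}_0, \mathbf{u}_i\rangle + \|\mathbf{u}_i\|^2 = 0,\]
and this relation is the workhorse of everything below. Part (b) is then immediate: expanding $\|\mathbf{u}_0+\mathbf{u}_1+\mathbf{u}_2\|^2 = \|\mathbf{u}_0\|^2$ and substituting the two relations above collapses to $2\langle \mathbf{u}_1,\mathbf{u}_2\rangle = 0$.

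For part (a), the hypothesis $\mathbf{u}_0+\mathbf{u}_1+\mathbf{u}_2 = \mathbf{0}$ says $\mathbf{u}_2 = -(\mathbf{u}_0+\mathbf{u}_1)$, so $\|\mathbf{u}_2\| = \|\mathbf{u}_0+\mathbf{u}_1\| = \|\mathbf{u}_0\|$, and symmetrically $\|\mathbf{u}_1\| = \|\mathbf{u}_0\|$. Plugging this into the identity above gives $\langle \mathbf{u}_0, \mathbf{u}_i\rangle = -\tfrac{1}{2}\|\mathbf{u}_0\|^2$, so the orthogonal projection of $\mathbf{u}_i$ onto the span of $\mathbf{u}_0$ is $-\tfrac{1}{2}\mathbf{u}_0$. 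Writing $\mathbf{u}_i = -\tfrac{1}{2}\mathbf{u}_0 + \mathbf{w}_i$ with $\mathbf{w}_i \perp \mathbf{u}_0$, the Pythagorean identity forces $\|\mathbf{w}_i\|^2 = \tfrac{3}{4}\|\mathbf{u}_0\|^2$. Finally, substituting these decompositions into $\mathbf{u}_1+\mathbf{u}_2 = -\mathbf{u}_0$ reduces to $\mathbf{w}_1 + \mathbf{w}_2 = \mathbf{0}$, and setting $\mathbf{v} = \tfrac{2}{\sqrt{3}}\,\mathbf{w}_1$ yields the required representation with $\mathbf{v} \perp \mathbf{u}_0$ and $\|\mathbf{v}\| = \|\mathbf{u}_0\|$.

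I do not expect a real obstacle here; the argument is entirely bookkeeping of inner products. The only minor subtlety is the rescaling from $\mathbf{w}_1$ to $\mathbf{v}$ needed to match the normalization $\|\mathbf{v}\| = \|\mathbf{u}_0\|$ in the statement, which is where the factor $\tfrac{\sqrt{3}}{2}$ appears. One sanity check worth noting is that the identification $\|\mathbf{u}_1\| = \|\mathbf{u}_2\| = \|\mathbf{u}_0\|$ in part (a) is what makes case (c) of Theorem~\ref{thm:rademacher} an equilateral configuration rather than something more general.
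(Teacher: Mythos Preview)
Your proof is correct and follows essentially the same route as the paper's: both arguments expand the norm identities into inner-product relations, and in part~(a) both use $\mathbf{u}_0+\mathbf{u}_1+\mathbf{u}_2=\mathbf{0}$ to deduce $\|\mathbf{u}_1\|=\|\mathbf{u}_2\|=\|\mathbf{u}_0\|$, then decompose $\mathbf{u}_i$ orthogonally relative to $\mathbf{u}_0$. The only cosmetic difference is that for~(b) the paper groups $(\mathbf{u}_0+\mathbf{u}_1)+\mathbf{u}_2$ before expanding, whereas you expand all three at once and invoke your precomputed identity $2\langle \mathbf{u}_0,\mathbf{u}_i\rangle+\|\mathbf{u}_i\|^2=0$; the content is identical.
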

\begin{proof}
	For (a) we see that $\|\mathbf{u}_0 + \mathbf{u}_1 + \mathbf{u}_2\|=0$ implies that $\mathbf{u}_1 = -(\mathbf{u}_0+\mathbf{u}_2)$. Since $\|\mathbf{u}_0+\mathbf{u}_2\|=\|\mathbf{u}_0+\mathbf{u}_1\|$ by assumption, we see that $\|\mathbf{u}_1\|=\|\mathbf{u}_0+\mathbf{u}_1\|$. Expanding, we obtain
	\[\|\mathbf{u}_1\|^2 = \|\mathbf{u}_0\|^2 + 2 \langle \mathbf{u}_0,\mathbf{u}_1 \rangle + \|\mathbf{u}_1\|^2,\]
	which means that $\mathbf{u}_1 = -\frac{1}{2}\mathbf{u}_0 + \widetilde{\mathbf{u}}_1$ where $\widetilde{\mathbf{u}}_1 \perp \mathbf{u}_0$. It follows that
	\[\mathbf{u}_2 = -(\mathbf{u}_0+\mathbf{u}_1)= -\frac{1}{2}\mathbf{u}_0-\widetilde{\mathbf{u}}_1,\]
	and since $\|\mathbf{u}_1\|=\|\mathbf{u}_0\|$ we conclude that $\|\widetilde{\mathbf{u}}_1\|=\frac{\sqrt{3}}{2} \|\mathbf{u}_0\|$. This implies the stated result.
	
	For (b) we compute 
	\begin{align*}
		\|\mathbf{u}_0+\mathbf{u}_1+\mathbf{u}_2\|^2 &= \|\mathbf{u}_0+\mathbf{u}_1\|^2 + 2 \langle \mathbf{u}_0+\mathbf{u}_1,\mathbf{u}_2 \rangle + \|\mathbf{u}_2\|^2 \\
		&= \|\mathbf{u}_0+\mathbf{u}_1\|^2 + 2\langle \mathbf{u}_1, \mathbf{u}_2 \rangle + \|\mathbf{u}_0+\mathbf{u}_2\|^2 - \|\mathbf{u}_0\|^2. 
	\end{align*}
	Since $\|\mathbf{u}_0+\mathbf{u}_1+\mathbf{u}_2\|=\|\mathbf{u}_0+\mathbf{u}_1\|=\|\mathbf{u}_0+\mathbf{u}_2\|=\|\mathbf{u}_0\|$, we see that $\langle \mathbf{u}_1, \mathbf{u}_2 \rangle =0$. 
\end{proof}
\begin{lemma}\label{lem:geom2} 
	Suppose that $\mathbf{u}_0,\mathbf{u}_1,\mathbf{u}_2,\mathbf{u}_3$ are nonzero vectors in $H$ such that
	\[\|\mathbf{u}_0\| = \|\mathbf{u}_0+\mathbf{u}_1\| = \|\mathbf{u}_0+\mathbf{u}_2\| = \|\mathbf{u}_0+\mathbf{u}_3\|\]
	and that
	\[\|\mathbf{u}_0+\mathbf{u}_1+\mathbf{u}_2\|,\quad \|\mathbf{u}_0+\mathbf{u}_1+\mathbf{u}_3\|,\quad \|\mathbf{u}_0+\mathbf{u}_2+\mathbf{u}_3\|\quad \text{and}\quad \|\mathbf{u}_0+\mathbf{u}_1+\mathbf{u}_2+\mathbf{u}_3\|\]
	are (independently) equal either to $0$ or to $\|\mathbf{u}_0\|$. Then
	\[\|\mathbf{u}_0+\mathbf{u}_1+\mathbf{u}_2\|=\|\mathbf{u}_0+\mathbf{u}_1+\mathbf{u}_3\|=\|\mathbf{u}_0+\mathbf{u}_2+\mathbf{u}_3\|=\|\mathbf{u}_0+\mathbf{u}_1+\mathbf{u}_2+\mathbf{u}_3\|=\|\mathbf{u}_0\|.\]
\end{lemma}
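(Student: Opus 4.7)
The plan is to argue by contradiction: suppose that at least one of the four quantities $\|\mathbf{u}_0+\mathbf{u}_1+\mathbf{u}_2\|$, $\|\mathbf{u}_0+\mathbf{u}_1+\mathbf{u}_3\|$, $\|\mathbf{u}_0+\mathbf{u}_2+\mathbf{u}_3\|$, $\|\mathbf{u}_0+\mathbf{u}_1+\mathbf{u}_2+\mathbf{u}_3\|$ equals $0$ and derive a contradiction. Throughout I will rely on Lemma~\ref{lem:geom1} together with the elementary identity $\langle \mathbf{u}_0, \mathbf{u}_j \rangle = -\tfrac{1}{2}\|\mathbf{u}_j\|^2$ for $j=1,2,3$, which falls out of expanding $\|\mathbf{u}_0+\mathbf{u}_j\|^2 = \|\mathbf{u}_0\|^2$. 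The analysis splits naturally into two cases.

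\emph{Case 1: some three-term sum vanishes, say $\|\mathbf{u}_0+\mathbf{u}_1+\mathbf{u}_2\| = 0$.} By Lemma~\ref{lem:geom1}(a), $\mathbf{u}_1 = -\tfrac{1}{2}\mathbf{u}_0 + \tfrac{\sqrt{3}}{2}\mathbf{v}$ and $\mathbf{u}_2 = -\tfrac{1}{2}\mathbf{u}_0 - \tfrac{\sqrt{3}}{2}\mathbf{v}$ for a vector $\mathbf{v}$ which is uniquely determined by $\mathbf{u}_0$ and $\mathbf{u}_1$. In particular, $\mathbf{u}_1 + \mathbf{u}_2 = -\mathbf{u}_0$, so $\mathbf{u}_0+\mathbf{u}_1+\mathbf{u}_2+\mathbf{u}_3 = \mathbf{u}_3$ is nonzero, and hence $\|\mathbf{u}_3\| = \|\mathbf{u}_0\|$. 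Next I argue that neither of $\|\mathbf{u}_0+\mathbf{u}_1+\mathbf{u}_3\|$ nor $\|\mathbf{u}_0+\mathbf{u}_2+\mathbf{u}_3\|$ can vanish: if, say, the first were zero, a second application of Lemma~\ref{lem:geom1}(a) would, by uniqueness of $\mathbf{v}$, force $\mathbf{u}_3 = \mathbf{u}_2$, making $\mathbf{u}_0+\mathbf{u}_2+\mathbf{u}_3 = -\sqrt{3}\mathbf{v}$ a vector of forbidden length $\sqrt{3}\|\mathbf{u}_0\|$; symmetry in $\mathbf{u}_1 \leftrightarrow \mathbf{u}_2$ disposes of the other possibility. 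With both remaining three-term sums thus equal to $\|\mathbf{u}_0\|$, two applications of Lemma~\ref{lem:geom1}(b) deliver $\mathbf{u}_3 \perp \mathbf{u}_1$ and $\mathbf{u}_3 \perp \mathbf{u}_2$. Since $\mathbf{u}_1+\mathbf{u}_2 = -\mathbf{u}_0$, this forces $\langle \mathbf{u}_0, \mathbf{u}_3 \rangle = 0$, in direct conflict with $\langle \mathbf{u}_0, \mathbf{u}_3\rangle = -\tfrac{1}{2}\|\mathbf{u}_0\|^2 \neq 0$ given by the elementary identity.

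\emph{Case 2: all three three-term sums equal $\|\mathbf{u}_0\|$ but $\|\mathbf{u}_0+\mathbf{u}_1+\mathbf{u}_2+\mathbf{u}_3\| = 0$.} Three applications of Lemma~\ref{lem:geom1}(b) deliver the pairwise orthogonality of $\mathbf{u}_1, \mathbf{u}_2, \mathbf{u}_3$. Expanding $\|\mathbf{u}_0+\mathbf{u}_1+\mathbf{u}_2+\mathbf{u}_3\|^2$ and using the elementary identity to cancel the three $\mathbf{u}_0$-cross terms against the $\|\mathbf{u}_j\|^2$ terms leaves exactly $\|\mathbf{u}_0\|^2$, which must therefore equal $0$; since $\mathbf{u}_0 \neq \mathbf{0}$, this is the required contradiction.

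The main obstacle is the bookkeeping in Case 1, specifically the step that exploits uniqueness of the auxiliary vector $\mathbf{v}$ from Lemma~\ref{lem:geom1}(a) across two separate applications to force the coincidence $\mathbf{u}_3 = \mathbf{u}_2$ and thereby produce a vector of the forbidden length $\sqrt{3}\|\mathbf{u}_0\|$. Once this coincidence is ruled out, both cases resolve through routine norm expansions.
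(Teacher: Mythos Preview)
Your proof is correct. The overall scheme---contradiction, case analysis, and repeated appeal to Lemma~\ref{lem:geom1}---is the same as the paper's, but you organise the cases in the opposite order and finish each branch with a somewhat different computation. The paper first rules out $\|\mathbf{u}_0+\mathbf{u}_1+\mathbf{u}_2+\mathbf{u}_3\|=0$ by writing each $\mathbf{u}_j=-\tfrac12\mathbf{u}_0+\widetilde{\mathbf{u}}_j$ with $\widetilde{\mathbf{u}}_j\perp\mathbf{u}_0$ and reading off the lower bound $\tfrac12\|\mathbf{u}_0\|$ for the four-term sum; only afterwards does it attack the three-term sums, using the four-term result to pin down $\|\mathbf{u}_3\|$. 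You instead dispose of the three-term sums first and, in your Case~2, leverage Lemma~\ref{lem:geom1}(b) to obtain the pairwise orthogonality of $\mathbf{u}_1,\mathbf{u}_2,\mathbf{u}_3$, after which the expansion of $\|\mathbf{u}_0+\mathbf{u}_1+\mathbf{u}_2+\mathbf{u}_3\|^2$ collapses immediately to $\|\mathbf{u}_0\|^2$. Similarly, your closing move in Case~1---deducing $\mathbf{u}_3\perp\mathbf{u}_0$ from $\mathbf{u}_3\perp\mathbf{u}_1,\mathbf{u}_2$ and $\mathbf{u}_1+\mathbf{u}_2=-\mathbf{u}_0$, then clashing with the identity $\operatorname{Re}\langle\mathbf{u}_0,\mathbf{u}_3\rangle=-\tfrac12\|\mathbf{u}_3\|^2$---replaces the paper's more computational treatment via the $\widetilde{\mathbf{u}}$-coordinates. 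Both routes are short; yours squeezes a bit more out of Lemma~\ref{lem:geom1}(b) and the elementary identity, while the paper's orthogonal-decomposition viewpoint is what feeds the induction in Lemma~\ref{lem:geom3}.
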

\begin{proof}
	We begin by demonstrating that 
	\begin{equation}\label{eq:contradict1} 
		\|\mathbf{u}_0+\mathbf{u}_1+\mathbf{u}_2+\mathbf{u}_3\|=0 
	\end{equation}
	is impossible. If \eqref{eq:contradict1} holds, then $\mathbf{u}_1 = -(\mathbf{u}_0+\mathbf{u}_2+\mathbf{u}_3)$. Hence $\|\mathbf{u}_1\| = \|\mathbf{u}_0+\mathbf{u}_2+\mathbf{u}_3\|$ is equal to either $0$ or $\|\mathbf{u}_0\|$. By our assumption that $\mathbf{u}_1$ is nonzero it follows that $\|\mathbf{u}_1\|=\|\mathbf{u}_0\|$. Combining this with the assumption that $\|\mathbf{u}_0+\mathbf{u}_1\|=\|\mathbf{u}_0\|$ as in the proof of Lemma~\ref{lem:geom1}~(a), we conclude that $\mathbf{u}_1 = -\frac{1}{2}\mathbf{u}_0+\widetilde{\mathbf{u}}_1$ where $\widetilde{\mathbf{u}}_1\perp \mathbf{u}_0$. By symmetry, the same holds also for $\mathbf{u}_2$ and $\mathbf{u}_3$. By orthogonality, we find that
	\[\|\mathbf{u}_0+\mathbf{u}_1+\mathbf{u}_2+\mathbf{u}_3\| \geq \frac{1}{2} \|\mathbf{u}_0\|,\]
	contradicting the assumption that $\mathbf{u}_0$ is nonzero in view of \eqref{eq:contradict1}. Hence 
	\begin{equation}\label{eq:correct1} 
		\|\mathbf{u}_0+\mathbf{u}_1+\mathbf{u}_2+\mathbf{u}_3\|=\|\mathbf{u}_0\|. 
	\end{equation}
	By symmetry, it remains to show that 
	\begin{equation}\label{eq:contradict2} 
		\|\mathbf{u}_0+\mathbf{u}_1+\mathbf{u}_2\|=0 
	\end{equation}
	is impossible. If \eqref{eq:contradict2} holds, then $\mathbf{u}_1 = -(\mathbf{u}_0+\mathbf{u}_2)$. As in the proof of Lemma~\ref{lem:geom1}~(a) we get that $\mathbf{u}_1 = -\frac{1}{2}\mathbf{u}_0 + \widetilde{\mathbf{u}}_1$ where $\widetilde{\mathbf{u}}_1 \perp \mathbf{u}_0$ and $\|\widetilde{\mathbf{u}}_1\|=\frac{\sqrt{3}}{2} \|\mathbf{u}_0\|$. By \eqref{eq:contradict2} we find that $\mathbf{u}_2 = -\frac{1}{2}\mathbf{u}_0 - \widetilde{\mathbf{u}}_1$. By \eqref{eq:correct1}, we know that $\|\mathbf{u}_3\|=\|\mathbf{u}_0\|$, which means that $\mathbf{u}_3 = - \frac{1}{2}\mathbf{u}_0+\widetilde{\mathbf{u}}_3$ where $\widetilde{\mathbf{u}}_3\perp \mathbf{u}_0$ and $\|\widetilde{\mathbf{u}}_3\|=\frac{\sqrt{3}}{2}\|\mathbf{u}_0\|$. We now look at the final two expressions
	\[\|\mathbf{u}_0+\mathbf{u}_1+\mathbf{u}_3\| = \|\widetilde{\mathbf{u}}_1+\widetilde{\mathbf{u}}_3\| \qquad \text{and} \qquad \|\mathbf{u}_0+\mathbf{u}_2+\mathbf{u}_3\| = \|-\widetilde{\mathbf{u}}_1+\widetilde{\mathbf{u}}_3\|,\]
	which are by assumption either equal to $0$ or to $\|\mathbf{u}_0\|$. There are two possible cases. 
	\begin{enumerate}
		\item[1.] If (at least) one is equal to $0$, say $\|-\widetilde{\mathbf{u}}_1+\widetilde{\mathbf{u}}_3\| =0$, then $\widetilde{\mathbf{u}}_3=\widetilde{\mathbf{u}}_1$ and hence
		\[\|\widetilde{\mathbf{u}}_1+\widetilde{\mathbf{u}}_3\| = 2\|\widetilde{\mathbf{u}}_1\|=\sqrt{3}\|\mathbf{u}_0\| .\]
		This contradicts the assumption that $\|\widetilde{\mathbf{u}}_1+\widetilde{\mathbf{u}}_3\|$ is equal to $0$ or $\|\mathbf{u}_0\|$, since $\mathbf{u}_0$ is nonzero by assumption. A similar contradiction is reached if we start by assuming that $\| \widetilde{\mathbf{u}}_1+ \widetilde{\mathbf{u}}_3\|=0$. 
		\item[2.] If both are equal to $\|\mathbf{u}_0\|$, then
		\[0 = \|\widetilde{\mathbf{u}}_1+\widetilde{\mathbf{u}}_3\|^2 - \|-\widetilde{\mathbf{u}}_1+\widetilde{\mathbf{u}}_3\|^2 = 4 \langle \widetilde{\mathbf{u}}_1,\widetilde{\mathbf{u}}_3 \rangle.\]
		This shows that $\|\widetilde{\mathbf{u}}_1+\widetilde{\mathbf{u}}_3\|^2 = \frac{3}{2}\|\mathbf{u}_0\|^2$ which cannot be equal to $\|\mathbf{u}_0\|^2$ since $\mathbf{u}_0$ is nonzero by assumption. 
	\end{enumerate}
	We conclude that \eqref{eq:contradict2} is false and hence $\|\mathbf{u}_0+\mathbf{u}_1+\mathbf{u}_2\|=\|\mathbf{u}_0\|$. 
\end{proof}

For a sequence of vectors $(\mathbf{u}_j)_{j \geq 1}$ in $H$ and a subset $J$ of $\mathbb{N}$, let
\[\mathbf{u}(J) = \sum_{j \in J} \mathbf{u}_j,\]
and let $|J|$ denote the cardinality of $J$. We are now ready to establish the third and final geometric lemma. 
\begin{lemma}\label{lem:geom3} 
	Let $\mathbf{u}_0$ be a nonzero vector in $H$, and for an index set $J$ with $|J|\geq 3$ suppose that $(\mathbf{u}_j)_{j \in J}$ is a sequence of nonzero vectors in $H$ with
	\[\|\mathbf{u}_0+\mathbf{u}_j\| = \|\mathbf{u}_0\| \]
	for all $j$ in $J$. If $\|\mathbf{u}_0+\mathbf{u}(K)\|$ is (independently) equal to either $0$ or $\|\mathbf{u}_0\|$ for every finite subset $K$ of $J$, then $\|\mathbf{u}_0+\mathbf{u}(K)\|=\|\mathbf{u}_0\|$ for every finite subset $K$ of $J$. 
\end{lemma}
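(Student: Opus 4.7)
My plan is to prove the lemma by induction on $|K|$, using Lemma~\ref{lem:geom2} to handle the base cases $|K|\le 3$ and a short orthogonal decomposition argument for the inductive step.

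For the base case, I use the hypothesis $|J|\ge 3$ to extend any given $K$ with $|K|\le 3$ to a three-element subset $\{j_1,j_2,j_3\}\subseteq J$. All hypotheses of Lemma~\ref{lem:geom2} are then immediately available for the quadruple $\mathbf{u}_0,\mathbf{u}_{j_1},\mathbf{u}_{j_2},\mathbf{u}_{j_3}$: the equalities $\|\mathbf{u}_0\|=\|\mathbf{u}_0+\mathbf{u}_{j_k}\|$ come from the standing assumption of Lemma~\ref{lem:geom3}, and the requirement that the sums over subsets of size $2$ or $3$ lie in $\{0,\|\mathbf{u}_0\|\}$ is likewise part of that assumption. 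The conclusion of Lemma~\ref{lem:geom2} forces $\|\mathbf{u}_0+\mathbf{u}(K')\|=\|\mathbf{u}_0\|$ for every nonempty $K'\subseteq\{j_1,j_2,j_3\}$, and hence in particular for $K$.

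For the inductive step, suppose the result holds for all subsets of size strictly less than $n$, where $n\ge 4$, and consider $K\subseteq J$ with $|K|=n$. The hypothesis forces $\|\mathbf{u}_0+\mathbf{u}(K)\|\in\{0,\|\mathbf{u}_0\|\}$, and I aim to rule out the vanishing alternative. So suppose for contradiction that $\mathbf{u}(K)=-\mathbf{u}_0$. Removing any index $j\in K$ leaves a subset of size $n-1\ge 3$, to which the inductive hypothesis applies; since $\mathbf{u}_0+\mathbf{u}(K\setminus\{j\})=-\mathbf{u}_j$, I deduce $\|\mathbf{u}_j\|=\|\mathbf{u}_0\|$ for every $j\in K$. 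Combined with $\|\mathbf{u}_0+\mathbf{u}_j\|=\|\mathbf{u}_0\|$, the expansion used in the proof of Lemma~\ref{lem:geom1}(a) produces $\mathbf{u}_j=-\tfrac{1}{2}\mathbf{u}_0+\widetilde{\mathbf{u}}_j$ with $\widetilde{\mathbf{u}}_j\perp\mathbf{u}_0$. Summing over $j\in K$ and projecting onto $\mathbf{u}_0$ then gives $-\mathbf{u}_0=-(n/2)\mathbf{u}_0$, hence $n=2$, contradicting $n\ge 4$.

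I expect the only delicate point to be the base case: one needs $|J|\ge 3$ (rather than merely $|K|\le 3$) so that all four hypotheses of Lemma~\ref{lem:geom2} can be invoked simultaneously for one fixed triple. Once the base case is settled, the inductive step amounts to one line of Hilbert space arithmetic.
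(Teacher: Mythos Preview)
Your argument is correct. The base case is handled exactly as in the paper: both of you invoke Lemma~\ref{lem:geom2} on a triple from $J$ to dispose of $|K|\le 3$.

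The inductive step, however, is genuinely different. The paper peels off three indices from $K$, writes $\widetilde{K}=K\setminus\{1,2,3\}$, uses the induction hypothesis to get $\|\mathbf{u}_0+\mathbf{u}(\widetilde{K})\|=\|\mathbf{u}_0+\mathbf{u}(\widetilde{K})+\mathbf{u}_i\|=\|\mathbf{u}_0\|$ for $i=1,2,3$, and then re-applies Lemma~\ref{lem:geom2} with the shifted base point $\mathbf{u}_0+\mathbf{u}(\widetilde{K})$. You instead argue by contradiction: assuming $\mathbf{u}(K)=-\mathbf{u}_0$, the induction hypothesis on $K\setminus\{j\}$ forces $\|\mathbf{u}_j\|=\|\mathbf{u}_0\|$ for every $j\in K$, whence each $\mathbf{u}_j$ has $\mathbf{u}_0$-component $-\tfrac{1}{2}\mathbf{u}_0$, and summing the $\mathbf{u}_0$-components yields $n=2$. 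Your route is more self-contained in the inductive step (no second appeal to Lemma~\ref{lem:geom2}) and isolates exactly why the vanishing alternative fails; the paper's route is more modular, reusing Lemma~\ref{lem:geom2} as a black box and never unpacking the orthogonal decomposition again. Both are short and clean.
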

\begin{proof}
	The proof is based on Lemma~\ref{lem:geom2} and induction. If $|K|=0$ or $|K|=1$, then there is nothing to prove. If $|K|=2$ or $|K|=3$, then the result follows directly from Lemma~\ref{lem:geom2}. Suppose therefore that $|K|\geq4$. Without loss of generality, we may assume that $\{1,2,3\} \subset K$ and write
	\[\mathbf{u}(K) = \mathbf{u}_1 + \mathbf{u}_2 + \mathbf{u}_3 + \mathbf{u}(\widetilde{K}),\]
	where $\widetilde{K} = K \setminus \{1,2,3\}$. By the induction hypothesis,
	\[\|\mathbf{u}_0+\mathbf{u}(\widetilde{K})\| = \|\mathbf{u}_0+\mathbf{u}(\widetilde{K})+\mathbf{u}_1\| = \|\mathbf{u}_0+\mathbf{u}(\widetilde{K})+\mathbf{u}_2\| = \|\mathbf{u}_0+\mathbf{u}(\widetilde{K})+\mathbf{u}_3\| = \|\mathbf{u}_0\|.\]
	By Lemma~\ref{lem:geom2} (with $\mathbf{u}_0$ replaced by $\mathbf{u}_0 + \mathbf{u}(\widetilde{K})$), we find that $\|\mathbf{u}_0+\mathbf{u}(K)\|=\|\mathbf{u}_0\|$. 
\end{proof}
\begin{proof}
	[Proof of Theorem~\ref{thm:rademacher}: Necessity] Suppose that $\varphi$ is a Hilbert point in $L^p(\Omega; H)$ of the form
	\[\varphi(\omega) = \sum_{j=1}^\infty \omega_j \mathbf{x}_j.\]
	Since $\varphi$ is in $L^p(\Omega; H)$ by assumption, we can combine \eqref{eq:rademacher2norm} with Khintchine's inequality (see e.g.~\cite[Corollary~3.2.24]{HNVW16}) for $1 \leq p<2$ or with H\"older's inequality for $2<p \leq \infty$, to conclude that
	\[\sum_{j=1}^\infty \|\mathbf{x}_j\|^2 < \infty.\]
	It now follows from \cite[Theorem~3.2]{Kahane85} that the series $\varphi(\omega)$ converges in $H$ for almost every $\omega$ in $\Omega$. We may replace $\mathbf{x}_j$ by $-\mathbf{x}_j$ without affecting whether $\varphi$ is a Hilbert point, which allows us to assume without loss of generality that
	\[\varphi(\mathbf{1}) = \sum_{j=1}^\infty \mathbf{x}_j\]
	converges in $H$. If $\varphi$ is not identically equal to $\mathbf{0}$, we may also assume that $\varphi(\mathbf{1}) \neq \mathbf{0}$. Let us for simplicity set $\mathbf{u}_0 = \varphi(\mathbf{1})$.
	
	If we start from $\omega = \mathbf{1}$ and change the sign of $\omega_j$ for a finite set of indices, then the series for $\varphi(\omega)$ will remain convergent in $H$. Specifically, changing the sign of one $\omega_j$ we obtain the vectors 
	\begin{equation}\label{eq:flip} 
		\varphi(\omega) = \mathbf{u}_0 - 2 \mathbf{x}_j 
	\end{equation}
	for $j=1,2,3,\ldots$. Since $\varphi$ is a Hilbert point by assumption, we know from Theorem~\ref{thm:general} that the norms of each of the vectors in \eqref{eq:flip} are equal to either $0$ or to $\|\mathbf{u}_0\|$. Let $J$ denote the index set of those vectors in $(\mathbf{x}_j)_{j \geq 1}$ which are nonzero and which satisfy $\|\mathbf{u}_0-2\mathbf{x}_j\|=\|\mathbf{u}_0\|$. There are four cases to be considered. 
	\begin{enumerate}
		\item[1.] If $|J|=0$, then any nonzero vector $\mathbf{x}_j$ satisfies $\|\mathbf{u}_0-2\mathbf{x}_j\|=0$, and thus $\mathbf{x}_j = \frac{1}{2}\mathbf{u}_0$. Since $\sum_{j\geq 1} \mathbf{x}_j=\mathbf{u}_0$, it follows that $\mathbf{x}_j =0$ for all but two indices, and we are in case (b) of Theorem~\ref{thm:rademacher}. 
		
		\item[2.] If $|J|=1$, then we can without loss of generality assume that $J=\{1\}$. For all $j\geq 2$ we therefore have either $\mathbf{x}_j=0$ or $\|\mathbf{u}_0-2\mathbf{x}_j \|=0$. In the latter case it follows that $\mathbf{x}_j = \frac{1}{2}\mathbf{u}_0$. Hence
		\[\mathbf{u}_0-\mathbf{x}_1 = \sum_{j=2}^{\infty} \mathbf{x}_j = c\mathbf{u}_0,\]
		where $c=k/2$ for a nonnegative integer $k$. This is only possible if $\mathbf{x}_j=0$ for all but finitely many $j$. We get $\mathbf{x}_1=(1-c)\mathbf{u}_0$, and since $\|\mathbf{u}_0-2\mathbf{x}_1\|=\|\mathbf{u}_0\|$ and $\mathbf{x}_1\neq 0$, it follows that $c=0$. This means that we are in case (a) of Theorem~\ref{thm:rademacher} with only one nonzero vector in the sequence.
		
		\item[3.] In the case $|J|=2$, we use Lemma~\ref{lem:geom1}. We assume without loss of generality that $J=\{1, 2\}$, and consider two subcases. 
		
		First, if
		\[\|\mathbf{u}_0-2(\mathbf{x}_1+\mathbf{x}_2)\|=\|\mathbf{u}_0\|,\]
		then by Lemma~\ref{lem:geom1}~(b) we conclude that $\mathbf{x}_1 \perp \mathbf{x}_2$. Suppose for the purposes of contradiction that there is some $\mathbf{x}_k$ with $k\geq3$ such that $\|\mathbf{u}_0-2\mathbf{x}_k\| =0$ and $\mathbf{x}_k=\frac{1}{2}\mathbf{u}_0$. Choosing $\omega_j=-1$ for $j=1,k$ and $\omega_j=1$ for every other $j$, we get
		\[\varphi(\omega) = \mathbf{u}_0 - 2(\mathbf{x}_1+\mathbf{x}_k) = -2 \mathbf{x}_1.\]
		Since $\varphi$ is a Hilbert point and $\mathbf{x}_1 \neq \mathbf{0}$, we must have $\|\mathbf{x}_1\|=\frac{1}{2}\|\mathbf{u}_0\|$. The same argument shows that $\|\mathbf{x}_2\|=\frac{1}{2} \|\mathbf{u}_0\|$. Choosing $\omega_j=-1$ for $j=1,2,k$ and $\omega_j=1$ for every other $j$, the assumption that $\varphi$ is a Hilbert point implies that $\|\mathbf{x}_1+\mathbf{x}_2\|$ is equal to either $0$ or $\frac{1}{2}\|\mathbf{u}_0\|$. Since $\mathbf{x}_1 \perp \mathbf{x}_2$, we get that
		\[\|\mathbf{x}_1+\mathbf{x}_2\| = \frac{\sqrt{2}}{2}\|\mathbf{u}_0\|,\]
		which is a contradiction. Hence $\mathbf{x}_j = \mathbf{0}$ for every $j\geq3$, and we are in case (a) of Theorem~\ref{thm:rademacher} with only two nonzero vectors in the sequence. 
		
		Second, if
		\[\|\mathbf{u}_0-2(\mathbf{x}_1+\mathbf{x}_2)\|=0,\]
		then Lemma~\ref{lem:geom1}~(a) shows that
		\[\mathbf{x}_1 = \frac{1}{4}\mathbf{u}_0 + \frac{\sqrt{3}}{4} \mathbf{v} \qquad \text{and} \qquad \mathbf{x}_2 = \frac{1}{4}\mathbf{u}_0 - \frac{\sqrt{3}}{4} \mathbf{v}\]
		where $\mathbf{v} \perp \mathbf{u}_0$ and $\|\mathbf{v}\|=\|\mathbf{u}_0\|$. For $j\geq 3$ we have either $\mathbf{x}_j=0$ or $\|\mathbf{u}_0-2\mathbf{x}_j\|=0$ and $\mathbf{x}_j = \frac{1}{2}\mathbf{u}_0$. Since
		\[\mathbf{u}_0-\mathbf{x}_1-\mathbf{x}_2 = \sum_{j=2}^{\infty} \mathbf{x}_j = \frac{1}{2}\mathbf{u}_0,\]
		we see that there is a single index $j\geq3$ such that $\mathbf{x}_j = \frac{1}{2}\mathbf{u}_0$ and $\mathbf{x}_j=\mathbf{0}$ for every other $j\geq3$, and we are in case (c) of Theorem~\ref{thm:rademacher}.
		
		\item[4.] In the case $|J|\geq3$ we rely on Lemma~\ref{lem:geom3}, which says that
		\[\bigg\|\mathbf{u}_0 - 2 \sum_{j \in K} \mathbf{x}_j\bigg\|=\|\mathbf{u}_0\|,\]
		for any finite subset $K$ of $J$. Using Lemma~\ref{lem:geom1} (b) iteratively, we find that $(\mathbf{x}_j)_{j\in J}$ is an orthogonal sequence. It remains to show that there are no other nonzero vectors in $(\mathbf{x}_j)_{j\geq 1}$. Arguing as in the first subcase of the previous case, we conclude that $\mathbf{x}_j = \mathbf{0}$ for every $j$ which is not in $J$. Thus we are in case (a) of Theorem~\ref{thm:rademacher} with $|J|$ nonzero vectors.\qedhere 
	\end{enumerate}
\end{proof}

\bibliographystyle{amsplain} 
\bibliography{hilberthilbert}

\end{document}